\def\red{\textcolor{black}}
\newtheorem{theorem}{Theorem}[section]
\newtheorem{lemma}[theorem]{Lemma}
\newtheorem{example}[theorem]{Example}
\newtheorem{proposition}[theorem]{Proposition}
\theoremstyle{definition}
\newtheorem{definition}[theorem]{Definition}
\newtheorem{remark}[theorem]{Remark}
\numberwithin{equation}{section}
\begin{document}

\vspace{0.5in}

\renewcommand{\bf}{\bfseries}
\renewcommand{\sc}{\scshape}
\vspace{0.5in}

\title[Motion planning algorithms]%
{Collision-free spatial motion of rigid bodies via topological complexity \\ }

\author{Cesar A. Ipanaque Zapata}
\address{Departamento de Matem\'{a}ticas, Universidade de S\~{a}o Paulo, 
Instituto de Ci\^{e}ncias Matem\'{a}ticas e de Computa\c{c}\~{a}o -
USP, Avenida Trabalhador S\~{a}o-carlense, 400 - Centro CEP:
13566-590 - S\~{a}o Carlos - SP, Brasil}
\curraddr{Departamento de Matem\'{a}ticas, Centro de Investigaci\'{o}n y de Estudios Avanzados del I. P. N.
Av. Instituto Polit\'{e}cnico Nacional n\'{u}mero 2508,
San Pedro Zacatenco, Mexico City 07000, M\'{e}xico}
\email{cesarzapata@usp.br}


\subjclass[2010]{Primary 55R80, 68T40; Secondary 55P10, 93C85, 70Q05}                                    %

\keywords{Configuration spaces, rigid bodies, robotics, topological complexity, motion planning algorithms.}
\thanks {The author wishes to acknowledge support for this research from grant\#2018/23678-6 and grant\#2016/18714-8, S\~ao Paulo Research Foundation (FAPESP). Also, the author is very grateful to Jes\'{u}s Gonz\'{a}lez for their comments and encouraging remarks which were of invaluable mental support.}

\begin{abstract} The Topological complexity a la Farber $\text{TC}(-)$ is a homotopy invariant  which have interesting applications in Robotics, specifically, in the robot motion planning problem. In this work we calculate the topological complexity of the configuration space of $k$ distinct rigid bodies without collisions in $\mathbb{R}^d$, for $d=2,3$. Furthermore, we present optimal algorithms which can be used in designing practical systems controlling motion of many rigid bodies moving in space without collisions. The motion planning algorithms we present in this work are easily implementable in practice.
\end{abstract}

\maketitle


\section{\bf Introduction}

Consider a multi-robot system consisting of $k$ mobile robots $R_1,\ldots,R_k$, which are rigid bodies and we consider them as compact subsets of $\mathbb{R}^d$ ($d\geq 2$), moving in Euclidean space $\mathbb{R}^d$ without collisions. We will suppose that the diameters of all robots are equal to $r>0$, i.e., $\text{diam}(R_i)=r>0,$ for any $i=1,\ldots,k$. The origin $"0"$ and coordinate basis vectors $\{e_1,e_2,\ldots,e_d\}$ of $\mathbb{R}^d$  will be referred to as \textit{the world frame} (we will also say the \textit{reference frame}). Recall that $e_i=(0,\ldots,0,1,0\ldots,0)\in \mathbb{R}^d$. We can always attach a coordinate frame (local frame) to a rigid object under consideration. In this work we shall deal with the pose (state or configuration) and the displacement of rectangular frames (see \cite{bajdRobotics}). We describe a rigid body by its orientation of the object and its position (e.g. the position of its center of mass), see Figure \ref{robot}.

Therefore, orientation-position determines the pose of a rigid object. The orientation of the local frame of the object and the position of the object are respect to the world frame.
\begin{figure}[h]
 \centering
\begin{tikzpicture}[x=.6cm,y=.6cm]
\draw[->](-2,0)--(-1,0); \draw[->](-2,0)--(-1,0.5); \draw[->](-2,0)--(-2,1);  
\node [anchor=west] at (-1,0) {\tiny$e_1$};
\node [above] at (-2,1) {\tiny$e_3$};
\node [above] at (-1,1) {\tiny$e_2$};
\node [below] at (-2,0) {\tiny$0$};
\fill[black!30] (2.5,1.5) ellipse (1 and 0.5); 
\draw[black, thick](3.3,1.3)--(3.3, 1.7); \draw(2.1,1.8)--(3.3, 1.7);\draw(2.1,1.2)--(3.3, 1.3); \filldraw[black] (2.3,1.8) circle (2pt); \filldraw[black] (3.1,1.7) circle (2pt); \filldraw[black] (2.3,1.2) circle (2pt); \filldraw[black] (3.1,1.3) circle (2pt);
\draw[black, thick](1.9,1.1)--(1.9, 1.9); \draw(2.1,1.1)--(2.1, 1.9); \draw(1.9,1.1)--(2.1, 1.1); \draw(1.9,1.9)--(2.1, 1.9); 
\filldraw[black] (2.5,1.5) circle (1pt); \node[anchor=east] at (2.5,2.5) {\tiny$(1)$};
\draw[dashed](2.5,1.5)--(4.5,1.5); \draw[dashed](2.5,1.5)--(2.5,3.5); \draw[dashed](2.5,1.5)--(3.5,2);
\filldraw[black] (3.3,1.5) circle (1pt); \node [below] at (3.5,1.5) {\tiny$\theta$};
\draw[->](2.5,1.5)--(3.5,1.5);
\node [below] at (2.6,1.6) {\tiny$p$};
\node [below] at (4.5,1.5) {\tiny$u_1$};
\node [above] at (2.5,3.5) {\tiny$u_3$};
\node [above] at (3.5,2) {\tiny$u_2$};
\draw[dashed](-2,0)--(2.5,1.5);
\fill[black!30] (7.5,-2.5) ellipse (1 and 0.5); 
\draw[black, thick](6.8,-2.2)--(7.2,-2); \draw(6.8,-2.2)--(7.1,-2.7); \draw(7.2,-2)--(8.2,-2.4); 
\draw[black, thick](7.2,-3)--(8.5,-2.5); \draw(7.2,-3)--(7,-2.8); \draw(8.5,-2.5)--(8.3,-2.3); \draw(7,-2.8)--(8.3,-2.3); 
\filldraw[black] (7,-2.6) circle (2pt); \filldraw[black] (6.8,-2.4) circle (1.5pt);
\filldraw[black] (8,-2.3) circle (2pt); \filldraw[black] (7.4,-2.1) circle (1.5pt);
\filldraw[black] (7.5,-2.5) circle (1pt); \node[below] at (7.5,-3.5) {\tiny$(1)$};
\draw[dashed](7.5,-2.5)--(6,-1.1); \draw[dashed](7.5,-2.5)--(5.5,-2.7); \draw[dashed](7.5,-2.5)--(7.5,-0.5);
\filldraw[black] (7,-2.1) circle (1pt); \node [above] at (7,-2.1) {\tiny$\theta^\prime$};
\draw[->](7.5,-2.5)--(7,-2.1);
\node [below] at (7.5,-2.6) {\tiny$p^\prime$};

\node [below] at (6,-1.1) {\tiny$u_1^\prime$};
\node [above] at (7.5,-0.5) {\tiny$u_3^\prime$};
\node [above] at (5.5,-2.7) {\tiny$u_2^\prime$};
\draw[dashed](-2,0)--(7.5,-2.5);
\end{tikzpicture}
\caption{The Robot $(1)$ has initial state $(\theta,p)=(id,p)$ and final state $(\theta^\prime,p^\prime)$.}
 \label{robot}
\end{figure}
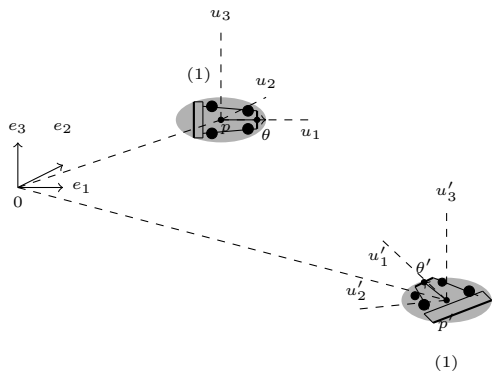

Recall that in general the \textit{configuration space} or \textit{state space} of a system $\mathcal{S}$ is defined as the space of all possible states of $\mathcal{S}$. The configuration space to the multi-robot system is the product $\left(SO(d)\right)^k\times F_r(\mathbb{R}^d,k)$, \[\{(\theta_1,\ldots,\theta_k;p_1,\ldots,p_k)\mid ~~(\theta_1,\ldots,\theta_k)\in \left(SO(d)\right)^k \text{ and } (p_1,\ldots,p_k)\in F_r(\mathbb{R}^d,k) \}, \]
where $F_r(\mathbb{R}^d,k)=\{(p_1,\ldots,p_k)\in (\mathbb{R}^d)^k\mid~~ \parallel p_i-p_j\parallel> 2r \text{ for } i\neq j\}$ is the configuration space of all possible arrangements of $k$ nonoverlapping disks of radius $r$ in $\mathbb{R}^d$, equipped with subspace topology of the Cartesian power $(\mathbb{R}^d)^k$.

Note that the $i-th$ coordinates $(\theta_i,p_i)$ of a point $(\theta_1,\ldots,\theta_i,\ldots,\theta_k;p_1,\ldots,p_i,\ldots,p_k)\in \left(SO(d)\right)^k\times F_r(\mathbb{R}^d,k)$ represents the orientation-position of the $i-th$ moving object, so that the condition $\parallel p_i-p_j\parallel> 2r$ reflects the collision-free requirement. 

In robotics, we need to know the configuration space $C$, the workspace $W$ and thus the work map $f:C\to W$ (see \cite{bajdRobotics}). The \textit{robot workspace} consists of all points that can be reached by the robot
end-point, i.e., the space of all tasks. The workspace $W$ is often described as a subspace of some Euclidean space $\mathbb{R}^N$. The \textit{work map} or \textit{kinematic map} is a continuous map from the configuration $C$ to the workspace $W$, that is, it is a continuous map \[f:C\to W\] which assigns to each state of the configuration space the position of the end-effector at that state.  This map is an important object to be considered when implementing algorithms controlling the task performed by the robot manipulator.

A more common task for mobile robots is to request them to navigate in an indoor environment, as shown in Figure \ref{robot}.
In this work the task of each robot consists of the point that can be reached by the pose of the robot, that is, a robot might be asked to perform tasks such as arriving at a particular place with a particular orientation. Thus, the workspace of this $k$ robots coincides with the configuration space $\left(SO(d)\right)^k\times F_r(\mathbb{R}^d,k)$ and the work map is the identity map. 

\begin{remark}
We will study in another paper when the task of each robot consists of the point that can be reached by the position of the robot, that is, a robot might be asked to perform tasks such as arriving at a particular place but the particular orientation is not specified in the task. Thus, the workspace coincides with $F_r(\mathbb{R}^d,k)$ and the work map coincides with the projection $\left(SO(d)\right)^k\times F_r(\mathbb{R}^d,k)\to F_r(\mathbb{R}^d,k)$.
\end{remark}

Our work is considered as an instance of planning. \textit{The collision-free robot motion planning problem} consists in controlling simultaneously the motion of these $k$ robots without collisions, where one is interested in initial-final states of the robots. To solve this problem we need to find a \emph{collision-free optimal motion planning algorithm} on state space $\left(SO(d)\right)^k\times F_r(\mathbb{R}^d,k)$ (see Section \ref{sec2}). The challenges of modern robotics (see, for example Latombe \cite{latombe2012robot} and LaValle \cite{lavalle2006planning}) is design explicit and suitably optimal motion planners. To give collision-free optimal algorithms we need to know the smallest possible number of regions of continuity for any collision-free motion planning algorithm, that is, (see Section \ref{sec2}) the value of the \textit{topological complexity} a la Farber $\text{TC}(\left(SO(d)\right)^k\times F_r(\mathbb{R}^d,k))$.

In this paper we compute the value of $\text{TC}(\left(SO(d)\right)^k\times F_r(\mathbb{R}^d,k))$ for $d=2,3$. 
\begin{theorem}[Principal theorem]\label{principal-theorem}
Let $k\geq 2$, we have
\begin{enumerate}
    \item $\text{TC}((\mathbb{S}^1)^k\times F_r(\mathbb{R}^2,k))=3k-2$.
    \item $\text{TC}((\mathbb{RP}^3)^k\times F_r(\mathbb{R}^3,k))=5k-1$.
\end{enumerate}
\end{theorem}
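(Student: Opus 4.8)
The plan is to reduce both computations to the classical ordered configuration space $F(\mathbb{R}^d,k)$ and then play a standard product upper bound against a matching lower bound.

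I would first use that the ``thick'' configuration space $F_r(\mathbb{R}^d,k)$ is homotopy equivalent to $F(\mathbb{R}^d,k)$ — given $k$ distinct points one continuously spreads them apart until all pairwise distances exceed $2r$, which gives a deformation retraction — so that, $\text{TC}$ being a homotopy invariant and $SO(2)\cong\mathbb{S}^1$, $SO(3)\cong\mathbb{RP}^3$, it suffices to evaluate $\text{TC}((\mathbb{S}^1)^k\times F(\mathbb{R}^2,k))$ and $\text{TC}((\mathbb{RP}^3)^k\times F(\mathbb{R}^3,k))$. The inputs I would assemble are: $\text{TC}(F(\mathbb{R}^2,k))=2k-2$ and $\text{TC}(F(\mathbb{R}^3,k))=2k-1$ (Farber--Yuzvinsky, Farber--Grant), the first detected by a rational zero-divisor cup-length equal to $2k-3$; and, since $(\mathbb{S}^1)^k$ and $(\mathbb{RP}^3)^k\cong SO(3)^k$ are connected Lie groups, the equality $\text{TC}=\text{cat}$, whence $\text{TC}((\mathbb{S}^1)^k)=k+1$ and $\text{TC}((\mathbb{RP}^3)^k)=3k+1$ (the cup-length lower bounds over $\mathbb{Q}$, resp.\ $\mathbb{Z}/2$, meeting the dimension upper bound for $\text{cat}$ of a closed manifold).

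The upper bounds are immediate from $\text{TC}(X\times Y)\le\text{TC}(X)+\text{TC}(Y)-1$: one gets $\text{TC}((\mathbb{S}^1)^k\times F(\mathbb{R}^2,k))\le(k+1)+(2k-2)-1=3k-2$ and $\text{TC}((\mathbb{RP}^3)^k\times F(\mathbb{R}^3,k))\le(3k+1)+(2k-1)-1=5k-1$. For the lower bound in case (1) I would work rationally: by superadditivity of the zero-divisor cup-length under Cartesian products (Künneth), $\text{zcl}_{\mathbb{Q}}((\mathbb{S}^1)^k\times F(\mathbb{R}^2,k))\ge k+(2k-3)=3k-3$, so $\text{TC}\ge 3k-2$, which proves (1).

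The delicate part, and where I expect the real work to lie, is the lower bound in case (2). Here the two factors are detected over incompatible coefficient fields — the full cup-length of $(\mathbb{RP}^3)^k$ is only visible mod $2$, while the top zero-divisor product of $F(\mathbb{R}^3,k)$ rests on the rational squares $\bar A_{ij}^{\,2}=-2\,A_{ij}\otimes A_{ij}$, which vanish mod $2$ — so summing zero-divisor cup-lengths over any single field yields only $\text{TC}\ge 5k-2$, one short of $5k-1$. To bridge this gap I would exploit that $G:=SO(3)^k$ is a topological group: after the standard translation homeomorphism of $(G\times Y)\times(G\times Y)$ the path fibration of $G\times Y$ becomes the product of the path--loop fibration $PG\to G$ (which has contractible total space, hence sectional category $\text{cat}(G)$) with the path fibration $Y^I\to Y^2$ of $Y:=F(\mathbb{R}^3,k)$; I would then establish the \emph{sharp} identity $\text{secat}\bigl((PG\to G)\times(Y^I\to Y^2)\bigr)=\text{cat}(G)+\text{TC}(Y)-1$, which gives $\text{TC}((\mathbb{RP}^3)^k\times F(\mathbb{R}^3,k))=(3k+1)+(2k-1)-1=5k-1$ (and incidentally recovers case (1) as well). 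The heart of the matter is the ``$\ge$'' half of that identity; a plausible alternative route to the same bound is a zero-divisor cup-length estimate with local coefficients over $\mathbb{Z}/2[\pi_1]$, using $\pi_1((\mathbb{RP}^3)^k)=(\mathbb{Z}/2)^k$, which may see the extra unit of topological complexity invisible to any constant coefficient system.
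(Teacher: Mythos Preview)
For case~(1) your argument coincides with the paper's: reduce $F_r(\mathbb{R}^2,k)$ to $F(\mathbb{R}^2,k)$ by homotopy invariance, then match the product inequality $\text{TC}(X\times Y)\le\text{TC}(X)+\text{TC}(Y)-1$ against a zero-divisor cup-length lower bound over a single field. The paper runs this with $\mathbb{K}=\mathbb{Z}_2$, you with $\mathbb{K}=\mathbb{Q}$; either choice gives $zcl((\mathbb{S}^1)^k)+zcl(F(\mathbb{R}^2,k))=k+(2k-3)$ and hence $\text{TC}=3k-2$.

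For case~(2) you have correctly identified a real obstacle, and in fact the paper's own proof glosses over precisely this point. The paper applies its product Lemma with $\mathbb{K}=\mathbb{Z}_2$, which requires $zcl_{\mathbb{Z}_2}(F(\mathbb{R}^3,k))=2k-2$; but this fails already for $k=2$, where $F(\mathbb{R}^3,2)\simeq S^2$ and the generator $u\in H^2(S^2;\mathbb{Z}_2)$ satisfies $\bar u^{\,2}=0$, so that $zcl_{\mathbb{Z}_2}(S^2)=1\neq 2$. More generally the Farber--Yuzvinsky witness $\prod_{j=2}^{k}\bar A_{1j}^{\,2}=(-2)^{k-1}\bigl(\prod_j A_{1j}\bigr)\otimes\bigl(\prod_j A_{1j}\bigr)$ vanishes mod~$2$. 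Thus the paper's short argument, as written, does not actually deliver the lower bound $5k-1$; your diagnosis that any single-field zero-divisor estimate falls one short is accurate and is not addressed by the paper.

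That said, your proposed remedy is not yet a proof either. The ``sharp identity'' $\text{secat}\bigl((PG\to G)\times(Y^I\to Y^2)\bigr)=\text{cat}(G)+\text{TC}(Y)-1$ is not a known theorem---only the inequality $\le$ is standard---and you rightly flag the $\ge$ direction as the unresolved crux; the local-coefficient alternative is likewise only a suggestion. In summary, your plan for~(1) is complete and agrees with the paper, while for~(2) you have spotted a genuine gap that the paper's proof does not confront, but you have not closed it.
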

Furthermore, we present optimal tame motion planning algorithms (see Definition \ref{tame}) in $\left(SO(d)\right)^k\times F_r(\mathbb{R}^d,k)$ with $3k-2$ (for $d=2$) and $5k-1$ (for $d=3$) regions of continuity, respectively. These algorithms work for any $k\geq 2$ and they are easily implementable in practice.


\section{Preliminary results}\label{sec2}

The notion of topological complexity was introduced by Farber, which is defined in terms of motion planning algorithms for a robot moving between initial-final configurations~\cite{farber2003topological}.

\medskip
For a topological space $X$, let $PX$ denote the space of paths $\gamma:[0,1]\to X$, equipped with the compact-open topology. One has the evaluation fibration \begin{equation}\label{evaluation-fibration}
    e:PX\to X\times X,~e(\gamma)=\left(\gamma(0),\gamma(1)\right).
\end{equation} 
A \textit{motion  planning  algorithm} is  a  section $s\colon X\times X\to PX$ of  the  fibration  $e$, i.e.~ a (not necessarily continuous) map satisfying $e\circ s=id_{X\times X}$. A (global) continuous motion planning algorithm in $X$ exists if and only if the space $X$ is contractible~\cite{farber2003topological}. This fact gives, in a natural way, the definition of the following numerical invariant. The \textit{topological complexity} $\text{TC}(X)$ of a path-connected space $X$ is the Schwarz genus of the evaluation fibration~(\ref{evaluation-fibration}). In  other  words the topological complexity of $X$ is the smallest positive integer $\text{TC}(X)=n$ for which  the product $X\times X$ is covered by $n$ open subsets $X\times X=U_1\cup\cdots\cup U_n$ such that for any $i=1,2,\ldots,n$ there exists a continuous section $s_i:U_i\to PX$ of $e$ 
over $U_i$ (i.e., $e\circ s_i=id$. Here $id$ denote the inclusion map $U_i\hookrightarrow X\times X$). Any motion  planning  algorithm $s:=\{s_i:U_i\to PX\}_{i=1}^{n}$ is called \textit{optimal} if $n=\text{TC}(X)$.

\medskip One of the basic properties of TC is its homotopy invariance, that is, if $X$ and $Y$ are homotopy equivalent then $\text{TC}(X)=\text{TC}(Y)$. Furthermore, their motion  planning  algorithms are explicitly related.

\begin{remark}\cite{farber2003topological} [Homotopy invariance]\label{homotopy-invariance}
Suppose $X$ \textit{dominates} $Y$, i.e., there are maps $f:X\to Y$ and $g:Y\to X$ such that $f\circ g\simeq id_Y$. Choose a homotopy $H:Y\times [0,1]\to Y$ with $H_0=id_Y$ and $H_1=f\circ g$. Let $\text{TC}(X)=n$ and let $s:=\{s_i:U_i\to PX\}_{i=1}^{n}$ be a motion  planning  algorithm to $X$ with $X\times X=U_1\cup\cdots\cup U_n$ and $e\circ s_i=id$. Set $V_i:=(g\times g)^{-1}(U_i)\subseteq Y\times Y$ for $i=1,\ldots,n$. Define $\hat{s}_i:V_i\to PY$ by the formula
\begin{equation}\label{up-homotopy-mp}
  \hat{s}_i(y_1,y_2)(t)= \begin{cases}
    H_{3t}(y_1), & \hbox{$0\leq t\leq \frac{1}{3}$;} \\
    f\left(s_i(g(y_1),g(y_2))(3t-1)\right), & \hbox{$\frac{1}{3}\leq t\leq \frac{2}{3}$;}\\
    H_{3-3t}(y_2), & \hbox{$\frac{2}{3}\leq t\leq 1$.}
\end{cases}.
\end{equation} One has $Y\times Y=V_1\cup\cdots\cup V_n$ is an open covering and $e\circ \hat{s}_i=id$. Thus $$\hat{s}:=\{\hat{s}_i:V_i\to PY\}_{i=1}^{n}$$ is a motion planning algorithm to $Y$ and hence $\text{TC}(Y)\leq n=\text{TC}(X)$.

 In particular, if $X$ and $Y$ are homotopy equivalent we have $\text{TC}(X)=\text{TC}(Y)=n$. Furthermore, if $s:=\{s_i:U_i\to PX\}_{i=1}^{n}$ is  an optimal motion  planning  algorithm to $X$ then $\hat{s}:=\{\hat{s}_i:V_i\to PY\}_{i=1}^{n}$, as above, is an optimal motion  planning  algorithm to $Y$.
\end{remark}

Let $\mathbb{K}$ be a field. The singular cohomology $H^{*}(X;\mathbb{K}):=H^{*}(X)$ is a graded $\mathbb{K}-$algebra with multiplication \[\cup:H^{*}(X)\otimes H^{*}(X)\longrightarrow H^{*}(X)\] given by the cup-product. The tensor product $H^{*}(X)\otimes H^{*}(X)$ is also a graded $\mathbb{K}-$algebra with the multiplication
\[(u_1\otimes v_1)\cdot (u_2\otimes v_2):=(-1)^{\deg(v_1)\deg(u_2)}u_1u_2\otimes v_1v_2,\]
 where $\deg(v_1)$ and $\deg(u_2)$ denote the degrees of cohomology classes $v_1$ and $u_2$ respectively. The cup-product $\cup$ is a homomorphism of $\mathbb{K}-$algebras. 
 
According to \cite{farber2003topological} the kernel of homomorphism $\cup$ is \textit{the ideal of the zero-divisors} of $H^{*}(X)$. The \textit{zero-divisors-cup-length} of $H^{*}(X)$  (with coefficients in $\mathbb{K}$), denoted $zcl_{\mathbb{K}}(H^{*}(X))$, is the length of the longest nontrivial product in the ideal of the zero-divisors of $H^{*}(X)$.

The following Proposition \ref{prop-1} gives the general properties of topological complexity.

\begin{proposition}\label{prop-1} 
\begin{enumerate}
\item  (\emph{\cite{farber2003topological}, Theorem 7}) Let $\mathbb{K}$ be a field and $X$ be a path-connected topological space. We have \[1+zcl_{\mathbb{K}}(H^{*}(X))\leq \text{TC}(X).\] 

\item (\emph{\cite{cohen2011topological}, Lemma 2.1}) Suppose that $X$ and $Y$ are path connected finite CW complexes and let $\mathbb{K}$ be a field. Then
\[zcl_{\mathbb{K}}\left(H^{*}(X)\otimes H^{*}(X)\right)\geq zcl_{\mathbb{K}}(H^{*}(X))+zcl_{\mathbb{K}}(H^{*}(Y)).\]

\item (\emph{\cite{farber2003topological}, Theorem 11}) Let $X$ and $Y$ be any path-connected metric spaces, then\[\text{TC}(X\times Y)\leq \text{TC}(X)+\text{TC}(Y)-1.\]
\end{enumerate}
\end{proposition}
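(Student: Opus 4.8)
The plan is to treat the three items separately, as each is a distinct classical fact about topological complexity and zero-divisor cup-length. Throughout I identify $\text{TC}(X)$ with the Schwarz genus (sectional category) $\mathrm{secat}(e)$ of the evaluation fibration $e:PX\to X\times X$ of~(\ref{evaluation-fibration}). For item (1), the first step is to relate $e$ to the diagonal: the inclusion of constant paths $c:X\to PX$ sending $x$ to the constant path at $x$ is a homotopy equivalence with $e\circ c=\Delta$, where $\Delta:X\hookrightarrow X\times X$ is the diagonal. Since $c^*$ is an isomorphism and $\Delta^*=c^*e^*$, we get $\ker(e^*)=\ker(\Delta^*)\subseteq H^*(X\times X)$. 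By the Künneth isomorphism $H^*(X\times X)\cong H^*(X)\otimes H^*(X)$ over the field $\mathbb{K}$, together with $\Delta^*(u\otimes v)=u\cup v$, this kernel is precisely the ideal of zero-divisors. The heart of the argument is then the cohomological lower bound for sectional category: if $\mathrm{secat}(e)\le n$, every $n$-fold product of classes in $\ker(e^*)$ vanishes. I would prove this by fixing an open cover $X\times X=U_1\cup\cdots\cup U_n$ with local sections $s_i$; for a zero-divisor $c_i$ one has $e^*c_i=0$, so pulling back along $s_i$ gives $c_i|_{U_i}=0$, whence $c_i$ lifts to a relative class in $H^*(X\times X,U_i)$. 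The relative cup product of these lifts lands in $H^*(X\times X,U_1\cup\cdots\cup U_n)=H^*(X\times X,X\times X)=0$ and maps to $c_1\cdots c_n$, forcing $c_1\cdots c_n=0$. Hence $zcl_{\mathbb{K}}(H^*(X))\le \text{TC}(X)-1$, equivalently item (1).

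For item (2) (where the right-hand factor should read $H^*(Y)$, so that the left side is $H^*(X)\otimes H^*(Y)\cong H^*(X\times Y)$), I would begin with maximal nonzero products of zero-divisors $\alpha_1\cdots\alpha_p\ne 0$ in $H^*(X)\otimes H^*(X)$ and $\beta_1\cdots\beta_q\ne 0$ in $H^*(Y)\otimes H^*(Y)$, where $p=zcl_{\mathbb{K}}(H^*(X))$ and $q=zcl_{\mathbb{K}}(H^*(Y))$. The projections $\pi_X:X\times Y\to X$ and $\pi_Y:X\times Y\to Y$ induce ring homomorphisms $(\pi_X\times\pi_X)^*$ and $(\pi_Y\times\pi_Y)^*$ from $H^*(X\times X)$ and $H^*(Y\times Y)$ into $H^*((X\times Y)\times(X\times Y))$. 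A short diagram chase shows these carry the two zero-divisor ideals into the zero-divisor ideal of $X\times Y$, because $\pi_X\times\pi_X$ (respectively $\pi_Y\times\pi_Y$) sends the diagonal of $X\times Y$ to the diagonal of $X$ (respectively $Y$), hence respects the kernels of the cup-product maps. The key step is to verify, through the graded Künneth identification and the Koszul signs in the tensor multiplication, that the product of the $p+q$ images equals $\pm(\alpha_1\cdots\alpha_p)$ tensored with $(\beta_1\cdots\beta_q)$ after reshuffling tensor factors; this is nonzero because over a field the tensor product of two nonzero classes is nonzero. This produces a nonzero product of $p+q$ zero-divisors in $H^*(X\times Y)$, which is the desired inequality.

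For item (3), I would use that the path-space functor preserves products, $P(X\times Y)\cong PX\times PY$, so that the evaluation fibration of $X\times Y$ is the product fibration $e_X\times e_Y$; it then suffices to establish the general product inequality $\mathrm{secat}(p_1\times p_2)\le\mathrm{secat}(p_1)+\mathrm{secat}(p_2)-1$. Taking optimal covers $X\times X=U_1\cup\cdots\cup U_m$ with $m=\text{TC}(X)$ and $Y\times Y=V_1\cup\cdots\cup V_n$ with $n=\text{TC}(Y)$, together with sections $s_i,t_j$, the $mn$ products $U_i\times V_j$ cover $(X\times Y)^2$ and each admits the product section obtained from $P(X\times Y)\cong PX\times PY$. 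I would then regroup these by the level $i+j\in\{2,\ldots,m+n\}$ into $m+n-1$ open sets $W_r$; using partitions of unity subordinate to the two covers, one arranges that within each $W_r$ the pieces coming from distinct pairs $(i,j)$ with $i+j=r$ are pairwise disjoint, so that the individual sections assemble into a single section over $W_r$. This yields a cover of $(X\times Y)^2$ by $m+n-1$ open sets each admitting a section, giving $\text{TC}(X\times Y)\le\text{TC}(X)+\text{TC}(Y)-1$.

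I expect the main obstacle to be the regrouping step in item (3): making the same-level pieces genuinely disjoint while keeping them open and preserving the existence of sections is exactly where the partition-of-unity (hence the metric/normality) hypothesis is used, and the bookkeeping there is more delicate than in items (1) and (2), which become essentially formal once the constant-path equivalence $e\circ c=\Delta$ and the Künneth sign conventions are in place.
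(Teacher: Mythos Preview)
The paper does not supply a proof of this proposition: it is stated as a collection of known facts, each part attributed to the cited reference (\cite{farber2003topological}, \cite{cohen2011topological}), and no argument is given in the text. Your sketch correctly reconstructs the standard proofs from those sources---the relative-cup-product argument for the cohomological lower bound in~(1), the K\"unneth/projection argument for the additivity of zero-divisor cup-length in~(2) (including your correction of the evident typo $H^*(X)\otimes H^*(X)$ to $H^*(X)\otimes H^*(Y)$), and the level-set regrouping with partitions of unity for the product inequality in~(3)---so there is nothing to compare against and no gap to flag.
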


\subsection{Tame motion planning algorithm}
\medskip
Despite the definition of $\text{TC}(X)$ deals with open subsets of $X\times X$ admitting continuous sections of the evaluation fibration (\ref{evaluation-fibration}), for practical purposes, the construction of explicit motion planning algorithms is usually done by partitioning the whole space $X\times X$ into pieces, over each of which~(\ref{evaluation-fibration}) has a continuous section.  Since any such partition necessarily contains subsets which are not open (recall $X$ has been assumed to be path-connected), we need to be able to operate with subsets of $X\times X$ of a more general nature.

Recall that, a topological space $X$ is an \textit{Euclidean Neighbourhood Retract} (ENR) if it can be embedded into an Euclidean space $\mathbb{R}^d$ with an open neighbourhood $U$, $X\subset U\subset \mathbb{R}^d$, admitting a retraction $r:U\to X,$ $r\mid_U=id_X$.

\begin{example}
A subspace $X\subset \mathbb{R}^d$ is an ENR if and only if it is locally compact and locally contractible, see~\red{\cite[Chap.~4, Sect.~8]{dold2012lectures}}. This implies that all finite-dimensional polyhedra, smooth manifolds and semi-algebraic sets are ENRs.
\end{example}

\begin{definition}\label{tame}
Let $X$ be an ENR. A motion planning algorithm $s:X\times X\to PX$ is said to be \textit{tame} if $X\times X$ splits as a pairwise disjoint union $X\times X=F_1\sqcup\cdots\sqcup F_n$, where each $F_i$ is an ENR, and each restriction $s\mid_{F_i}:F_i\to PX$ is continuous. The subsets $F_i$ in such a decomposition are called \emph{domains of continuity} for $s$.
\end{definition}

\begin{proposition}\emph{(\cite[Proposition 2.2]{rudyak2010higher})}\label{rudi}
For an ENR $X$, $\text{TC}(X)$ is the minimal number of domains of continuity $F_1,\ldots,F_n$ for tame motion planning algorithms $s:X\times X\to PX$.
\end{proposition}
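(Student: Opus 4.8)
The statement splits into two inequalities. Write $n=\text{TC}(X)$ (defined via open covers) and let $m$ be the least number of domains of continuity occurring in a tame motion planning algorithm $s\colon X\times X\to PX$. Since $X$ is an ENR, so is $X\times X$: embedding $X$ as a retract of an open set $U\subseteq\mathbb R^{a}$ exhibits $X\times X$ as a retract of the open set $U\times U\subseteq\mathbb R^{2a}$, and all of the decompositions below will live in this space. The plan is to prove $n\le m$ (``tame algorithms are at least as efficient as open covers'') and $m\le n$ (``an optimal open cover can be disjointified into ENRs'').

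For $n\le m$ I would start from a tame algorithm with domains $F_{1},\dots,F_{m}$ and continuous restrictions $s|_{F_{i}}\colon F_{i}\to PX$, and thicken each $F_{i}$ to an open set carrying a continuous section of $e$. Each $F_{i}$ is an ENR, hence locally compact, hence locally closed in the Hausdorff space $X\times X$; so $F_{i}$ is closed in some open set $O_{i}\subseteq X\times X$, and $O_{i}$ is a metrizable ANR. By standard ANR theory a closed ANR subset of a metric space is a neighbourhood deformation retract, so there are an open set $U_{i}$ with $F_{i}\subseteq U_{i}\subseteq O_{i}$ and a homotopy $D\colon U_{i}\times[0,1]\to U_{i}$ with $D_{0}=\mathrm{id}$, $D_{1}(U_{i})\subseteq F_{i}$ and $D_{t}|_{F_{i}}=\mathrm{id}$ for all $t$. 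For $(a,b)\in U_{i}$, writing $D_{1}(a,b)=(a',b')\in F_{i}$, I would define the section at $(a,b)$ to be the concatenation of the path $t\mapsto\mathrm{pr}_{1}D_{t}(a,b)$ (from $a$ to $a'$), the path $s|_{F_{i}}(a',b')$ (from $a'$ to $b'$), and the reverse of $t\mapsto\mathrm{pr}_{2}D_{t}(a,b)$ (from $b'$ to $b$); this is a continuous section over $U_{i}$. Since the $U_{i}$ cover $X\times X$, we get $\text{TC}(X)\le m$.

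For $m\le n$ — the substantive direction — I would start from an optimal open cover $X\times X=U_{1}\cup\dots\cup U_{n}$ with continuous sections $s_{1},\dots,s_{n}$, and aim for a partition $X\times X=F_{1}\sqcup\dots\sqcup F_{n}$ into ENRs with $F_{i}\subseteq U_{i}$; then $\bigsqcup_{i}s_{i}|_{F_{i}}$ is a tame algorithm with $n$ domains. The naive recipe $F_{i}=U_{i}\setminus(U_{1}\cup\dots\cup U_{i-1})$ partitions $X\times X$ and refines the cover, but its pieces are only locally closed, and a locally closed subset of an ENR need not be an ENR (a Cantor set is closed in $\mathbb R$ and not an ENR), so the disjointification must respect more structure. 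My plan is to borrow a polyhedral structure: realize $X\times X$ as a closed subset of an open set $W\subseteq\mathbb R^{N}$ with retraction $\pi\colon W\to X\times X$; triangulate the manifold $W$ finely enough that the closed star of every simplex lies in some $\pi^{-1}(U_{j})$; set $\lambda(\sigma)=\min\{\,j:\overline{\mathrm{st}(\sigma)}\subseteq\pi^{-1}(U_{j})\,\}$, so that $\lambda(\tau)\ge\lambda(\sigma)$ whenever $\tau$ is a face of $\sigma$ while $\overline{\sigma}\subseteq\pi^{-1}(U_{\lambda(\sigma)})$; and put $\widetilde F_{i}=\bigcup_{\lambda(\sigma)=i}\sigma^{\circ}$. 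The monotonicity of $\lambda$ makes $\widetilde F_{1}\cup\dots\cup\widetilde F_{i}$ open in $W$ for every $i$, and a local computation shows that near any of its points $\widetilde F_{i}$ is homeomorphic to an open simplex times the open cone on a finite subcomplex, so $\widetilde F_{i}$ is locally compact and locally contractible, hence an ENR. One then sets $F_{i}=(X\times X)\cap\widetilde F_{i}$, which partitions $X\times X$ with $F_{i}\subseteq U_{i}$.

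The step I expect to be the main obstacle is this last one: verifying that the descended pieces $F_{i}=(X\times X)\cap\widetilde F_{i}$ are still ENRs. Intersecting with the closed ENR $X\times X$ need not preserve local contractibility, so the good local models for the $\widetilde F_{i}$ do not transfer automatically; to push the argument through one must exploit the interaction between the triangulation and the retraction $\pi$ — for instance by perturbing the triangulation into ``general position'' with respect to $X\times X$, or by transporting the combinatorial partition along $\pi$ rather than intersecting — so that the pieces of $X\times X$ remain Euclidean neighbourhood retracts. Everything else is routine. Granting this, $\bigsqcup_{i}s_{i}|_{F_{i}}$ is a tame motion planning algorithm with exactly $n=\text{TC}(X)$ domains of continuity, so $m\le n$, and combined with $n\le m$ this gives $\text{TC}(X)=m$, as claimed.
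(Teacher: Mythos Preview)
The paper does not prove this proposition at all; it is quoted from Rudyak with a citation and no argument, so there is no in-house proof to compare against. I will therefore comment on your attempt on its own terms.

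Your direction $n\le m$ is sound. Each domain $F_i$ is an ENR, hence an ANR, and is closed in some open $O_i\subseteq X\times X$; since $O_i$ is open in an ENR it is again an ANR, so $(O_i,F_i)$ is an ANR pair and the neighbourhood-deformation-retract property you invoke is available. The concatenated path you write down is a genuine continuous section over the thickened open set.

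Your direction $m\le n$ has a real gap, precisely the one you single out. Triangulating an open Euclidean neighbourhood $W\supseteq X\times X$ and intersecting the polyhedral pieces $\widetilde F_i$ with $X\times X$ need not yield ENRs: a closed subset of an ENR can fail to be locally contractible (your own Cantor-set remark), and nothing about the retraction $\pi$ forces $X\times X$ to meet simplices in locally contractible sets. The ``general position'' and ``transport along $\pi$'' fixes you float are not arguments; for a general ENR $X$ there is no triangulation of $X\times X$ to perturb into, and pushing the partition forward by $\pi$ destroys disjointness.

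The standard route avoids the ambient triangulation altogether. For an arbitrary ENR one takes the naive disjointification $F_i=U_i\setminus(U_1\cup\cdots\cup U_{i-1})$; these sets are locally closed, hence locally compact, and $s_i|_{F_i}$ is continuous. This already shows that $\text{TC}(X)$ equals the minimal number of \emph{locally compact} domains of continuity, which is how Farber originally formulated tameness and how the result is typically used. If one insists on the stronger requirement in this paper's Definition~\ref{tame} that each $F_i$ be an ENR, the usual way to close the argument is to assume a bit more structure on $X$ (for instance that $X$ is a simplicial polyhedron, so that $X\times X$ itself can be triangulated and your combinatorial partition lives directly in $X\times X$); for a bare ENR the ENR condition on the pieces is exactly the delicate point, and your sketch does not resolve it.
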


\begin{remark}[Tame motion planner in a product]\label{product-mp}
In general, to get a motion planning algorithm in the product $X\times Y$ requires partitions of unity subordinate to covers from motion planning algorithms to $X$ and $Y$, respectively (\emph{\cite{farber2003topological}, Theorem 11}). However, we will recall here (see \cite{farber2004instabilities}, Section 12) a simple explicit construction of a tame motion planning algorithm in $X\times Y$ with $\text{TC}(X)+\text{TC}(Y)-1$ domains of continuity, under an additional assumption. This of course suits best our implementation-oriented objectives.

Let $s:=\{s_i:F_i\to PX\}_{i=1}^{n}$ be an optimal tame motion planner in $X$ and let $\sigma:=\{\sigma_j:G_j\to PY\}_{j=1}^{m}$ be an optimal tame motion planner in $Y$. Assume that the motion planner $s$, satisfies the following condition:
\begin{center}
   '\textit{Topologically disjoint condition}'- the closure of each set $F_i$ is contained in the union $F_1\cup\cdots\cup F_i$, in other words, it require that all sets of the form $F_1\cup\cdots\cup F_i$ be closed.
\end{center}
 Similarly, we will assume that $\sigma$ is a tame motion planner in $Y$ such that all sets of the form $G_1\cup\cdots\cup G_j$ are closed. Then we will set \begin{equation}\label{union}
    W_\ell= \bigsqcup_{i+j=\ell} F_i\times G_j, ~~\ell=2,\ldots,n+m.
\end{equation}
The sets are ENRs and form a partition of $(X\times X)\times (Y\times Y)=(X\times Y)\times (X\times Y)$. Our
assumptions guarantee that each product $F_i\times G_j$ is closed in $W_\ell$, where $l=i+j$. Since
different products in the union \ref{union} are disjoint, we see that the maps $s_i\times\sigma_j$, where
$i+j=\ell$, determine a continuous motion planning strategy over each set $W_\ell$. Furthermore, we note that the motion planner in $X\times Y$ as above also satisfies the 'Topologically disjoint condition', i.e.,  all sets of the form $W_2\cup\cdots\cup W_\ell$ be closed.
\end{remark}

\section{Poof of Theorem \ref{principal-theorem}}\label{proof-principalTH}
The proof of Theorem \ref{principal-theorem} is accomplished proving the next three lemmas.

\begin{lemma}[TC for products]\label{prop2}
Let $\mathbb{K}$ be a field and $X$ and $Y$ be any path-connected finite CW complexes. If $\text{TC}(X)=zcl_{\mathbb{K}}(X)+1$ and $\text{TC}(Y)=zcl_{\mathbb{K}}(Y)+1$, then
\[\text{TC}(X\times Y)=\text{TC}(X)+\text{TC}(Y)-1.\] Furthermore, $\text{TC}(X\times Y)=zcl_{\mathbb{K}}(X\times Y)+1.$

In particular, for any $k\geq 2$, $\text{TC}(\underbrace{X\times\cdots\times X}_{k \text{ times }})=k\text{TC}(X)-(k-1)$.
\end{lemma}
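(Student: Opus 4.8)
The plan is to prove the two asserted equalities by sandwiching $\text{TC}(X\times Y)$ between a lower bound coming from zero-divisor-cup-length and an upper bound coming from the product inequality in Proposition \ref{prop-1}(3). First I would write down the chain of inequalities
\[
\text{TC}(X)+\text{TC}(Y)-1 = \bigl(zcl_{\mathbb{K}}(X)+1\bigr)+\bigl(zcl_{\mathbb{K}}(Y)+1\bigr)-1 = zcl_{\mathbb{K}}(X)+zcl_{\mathbb{K}}(Y)+1,
\]
using the hypotheses on $X$ and $Y$. By the Künneth theorem, $H^{*}(X\times Y;\mathbb{K})\cong H^{*}(X;\mathbb{K})\otimes H^{*}(Y;\mathbb{K})$ as graded $\mathbb{K}$-algebras (this is where finiteness of the CW complexes and the field coefficients are used), so Proposition \ref{prop-1}(2) gives $zcl_{\mathbb{K}}(X\times Y)=zcl_{\mathbb{K}}\bigl(H^{*}(X)\otimes H^{*}(X)\bigr)\ge zcl_{\mathbb{K}}(X)+zcl_{\mathbb{K}}(Y)$. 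Combining this with Proposition \ref{prop-1}(1) yields
\[
\text{TC}(X\times Y)\ \ge\ 1+zcl_{\mathbb{K}}(X\times Y)\ \ge\ zcl_{\mathbb{K}}(X)+zcl_{\mathbb{K}}(Y)+1\ =\ \text{TC}(X)+\text{TC}(Y)-1.
\]

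For the reverse inequality I would simply invoke Proposition \ref{prop-1}(3), $\text{TC}(X\times Y)\le\text{TC}(X)+\text{TC}(Y)-1$, which applies since finite CW complexes are metrizable. The two bounds coincide, forcing equality throughout; in particular every inequality in the displayed chain is an equality, so $\text{TC}(X\times Y)=1+zcl_{\mathbb{K}}(X\times Y)$, which also shows that the pair $(X\times Y)$ again satisfies the hypothesis of the lemma. This last observation is exactly what makes the induction go through.

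For the ``in particular'' clause I would argue by induction on $k$. The base case $k=2$ is the statement just proved with $Y=X$. For the inductive step, assume $\text{TC}(X^{k-1})=\text{TC}(X^{k-1-1}\cdot)$... more precisely assume $\text{TC}(X^{k-1})=(k-1)\text{TC}(X)-(k-2)$ and that $X^{k-1}$ satisfies $\text{TC}(X^{k-1})=zcl_{\mathbb{K}}(X^{k-1})+1$; then apply the main part of the lemma to the pair $X^{k-1}$ and $X$ to get
\[
\text{TC}(X^{k})=\text{TC}(X^{k-1})+\text{TC}(X)-1=(k-1)\text{TC}(X)-(k-2)+\text{TC}(X)-1=k\,\text{TC}(X)-(k-1),
\]
and simultaneously conclude $\text{TC}(X^{k})=zcl_{\mathbb{K}}(X^{k})+1$, closing the induction.

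I do not expect a serious obstacle here: the whole argument is a bookkeeping assembly of results already quoted in Section \ref{sec2}. The one point deserving a sentence of care is the identification $H^{*}(X\times Y;\mathbb{K})\cong H^{*}(X;\mathbb{K})\otimes H^{*}(Y;\mathbb{K})$ as algebras, so that Proposition \ref{prop-1}(2) is literally applicable with $zcl_{\mathbb{K}}(X\times Y)$ on the left; this requires the Künneth isomorphism, for which the hypotheses ``finite CW complex'' and ``$\mathbb{K}$ a field'' are precisely tailored. Everything else — extracting equality from a two-sided bound, and propagating the ``$\text{TC}=zcl+1$'' property through the induction — is routine.
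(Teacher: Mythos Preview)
Your argument is correct and is precisely the unpacking of the paper's one-line proof ``It follows from Proposition~\ref{prop-1}'': you sandwich $\text{TC}(X\times Y)$ between the cohomological lower bound from parts (1)--(2) (via K\"unneth) and the product upper bound from part (3), and then propagate the equality $\text{TC}=zcl_{\mathbb{K}}+1$ inductively. There is nothing to add; the paper simply leaves these routine steps implicit.
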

\begin{proof}
It follows from Proposition \ref{prop-1}.
\end{proof}

By \cite{farber2003topological}, we have \[\text{TC}(\mathbb{S}^n)=zcl_{\mathbb{Z}_2}(\mathbb{S}^n)+1=\left\{
  \begin{array}{ll}
    2, & \hbox{for $n$ odd;} \\
    3, & \hbox{for $n$ even.}
  \end{array}
\right.\] Moreover, it is easy to see (or see \cite{farber2003topologicalproject}) $\text{TC}(\mathbb{RP}^3)=zcl_{\mathbb{Z}_2}(\mathbb{RP}^3)+1=4.$ Hence, we have the following statement.

\noindent\begin{lemma}\label{exem}  For any $k\geq 2$, one has
\begin{enumerate}
    \item $TC(\underbrace{\mathbb{S}^1\times\cdots\times\mathbb{S}^1}_{k \text{ times }})=zcl_{\mathbb{Z}_2}(\underbrace{\mathbb{S}^1\times\cdots\times\mathbb{S}^1}_{k \text{ times }})+1=k+1.$
    \item $TC(\underbrace{\mathbb{RP}^3\times\cdots\times\mathbb{RP}^3}_{k \text{ times }})=zcl_{\mathbb{Z}_2}
    (\underbrace{\mathbb{RP}^3\times\cdots\times\mathbb{RP}^3}_{k \text{ times }})+1=3k+1.$
\end{enumerate}
\end{lemma}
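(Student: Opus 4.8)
The plan is to deduce both statements directly from Lemma \ref{prop2}, together with the computations of $\text{TC}(\mathbb{S}^1)$ and $\text{TC}(\mathbb{RP}^3)$ recalled just above. The crucial point is that each of the two building blocks satisfies the hypothesis of Lemma \ref{prop2} over the fixed field $\mathbb{K}=\mathbb{Z}_2$: indeed $\text{TC}(\mathbb{S}^1)=2=zcl_{\mathbb{Z}_2}(\mathbb{S}^1)+1$ since $1$ is odd, and $\text{TC}(\mathbb{RP}^3)=4=zcl_{\mathbb{Z}_2}(\mathbb{RP}^3)+1$. Thus the only thing to do is to feed these two facts into the ``in particular'' clause of Lemma \ref{prop2}.

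For part (1) I would take $X=Y=\mathbb{S}^1$ in Lemma \ref{prop2}. Since $\text{TC}(\mathbb{S}^1)=zcl_{\mathbb{Z}_2}(\mathbb{S}^1)+1$, that lemma gives $\text{TC}((\mathbb{S}^1)^k)=k\,\text{TC}(\mathbb{S}^1)-(k-1)=2k-(k-1)=k+1$, and moreover $\text{TC}((\mathbb{S}^1)^k)=zcl_{\mathbb{Z}_2}((\mathbb{S}^1)^k)+1$, whence $zcl_{\mathbb{Z}_2}((\mathbb{S}^1)^k)=k$. Strictly speaking one argues by induction on $k$, the inductive step being a single application of Lemma \ref{prop2} to $X=(\mathbb{S}^1)^{k-1}$ and $Y=\mathbb{S}^1$; the hypothesis $\text{TC}(-)=zcl_{\mathbb{Z}_2}(-)+1$ propagates to the product $(\mathbb{S}^1)^{k-1}$ at each stage precisely by the ``furthermore'' part of Lemma \ref{prop2}, so the induction closes. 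Part (2) is identical with $\mathbb{RP}^3$ in place of $\mathbb{S}^1$, using $\text{TC}(\mathbb{RP}^3)=4$: one obtains $\text{TC}((\mathbb{RP}^3)^k)=4k-(k-1)=3k+1$ and $zcl_{\mathbb{Z}_2}((\mathbb{RP}^3)^k)=3k$.

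I do not expect any genuine obstacle; the content of the lemma is bookkeeping with Lemma \ref{prop2}. The one point worth a moment's care is ensuring that the same field $\mathbb{Z}_2$ realizes the zero-divisor-cup-length lower bound for every factor, so that the product estimate and the induction can be carried out over a single fixed field; this is immediate from the recalled formulas. Beyond that, the only thing to verify is the arithmetic $2k-(k-1)=k+1$ and $4k-(k-1)=3k+1$.
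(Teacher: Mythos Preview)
Your proposal is correct and follows exactly the paper's approach: the paper simply recalls that $\text{TC}(\mathbb{S}^1)=zcl_{\mathbb{Z}_2}(\mathbb{S}^1)+1=2$ and $\text{TC}(\mathbb{RP}^3)=zcl_{\mathbb{Z}_2}(\mathbb{RP}^3)+1=4$, and then invokes the ``in particular'' clause of Lemma~\ref{prop2}. Your remark about carrying the induction over the single field $\mathbb{Z}_2$ is the only subtlety, and you handle it correctly.
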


\noindent Next, we will study the homotopy type of $F_r(\mathbb{R}^d,k)$. Recall that $$F_r(\mathbb{R}^d,k)=\{(p_1,\ldots,p_k)\in (\mathbb{R}^d)^k\mid~~ \parallel p_i-p_j\parallel> 2r \text{ for } i\neq j\}$$ denote \textit{the ordered configuration space of all possible arrangements of $k$ nonoverlapping disks of radius $r$} in $\mathbb{R}^d$, equipped with subspace topology of the Cartesian power $(\mathbb{R}^d)^k$.

\medskip Let $\chi:F(\mathbb{R}^d,k)\longrightarrow \mathbb{R}$ given by \[\chi(p):=\dfrac{1}{2}\min\{\parallel p_i-p_j\parallel: \text{ for } i\neq j\} \text{ with } p=(p_1,\ldots,p_k)\in F(\mathbb{R}^d,k),\] where $\parallel \cdot\parallel$ is the Euclidean norm and $F(\mathbb{R}^d,k)=\{(p_1,\ldots,p_k)\in (\mathbb{R}^d)^k\mid~~p_i\neq p_j \text{ for } i\neq j\}$ is the classical ordered configuration space \cite{fadell1962configuration}. Note that $F_r(\mathbb{R}^d,k)=\chi^{-1}(r,+\infty)$ and $F(\mathbb{R}^d,k)=\chi^{-1}(0,+\infty).$

\begin{lemma}[Homotopy type of $F_r(\mathbb{R}^d,k)$]\label{hard-spheres}
For any $r>0$ and $k\geq 2$, one has $F_r(\mathbb{R}^d,k)$ and $F(\mathbb{R}^d,k)$ are homotopy equivalent.
\end{lemma}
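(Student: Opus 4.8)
The plan is to exhibit an explicit deformation retraction of $F_r(\mathbb{R}^d,k)$ onto a subspace, and then show that this subspace is in turn homeomorphic (or at least homotopy equivalent) to $F(\mathbb{R}^d,k)$, or better, to construct a homotopy equivalence directly via a scaling argument. The key observation is that the map $\chi$ is continuous and positively homogeneous of degree one: for any scalar $\lambda>0$ one has $\chi(\lambda p)=\lambda\,\chi(p)$, where $\lambda p=(\lambda p_1,\ldots,\lambda p_k)$. Consequently the radial scaling action of $(0,+\infty)$ on $(\mathbb{R}^d)^k$ restricts to both $F(\mathbb{R}^d,k)=\chi^{-1}(0,+\infty)$ and $F_r(\mathbb{R}^d,k)=\chi^{-1}(r,+\infty)$, and carries one into the other for suitable $\lambda$.

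Concretely, first I would define $\varphi\colon F(\mathbb{R}^d,k)\to F_r(\mathbb{R}^d,k)$ by sending $p$ to $\lambda(p)\,p$ where $\lambda(p)$ is chosen so that $\chi(\lambda(p)p)>r$; the natural choice is $\lambda(p)=\tfrac{2r}{\chi(p)}$ (or $\lambda(p)=1+\tfrac{r}{\chi(p)}$), both of which are continuous in $p$ since $\chi$ is continuous and strictly positive on $F(\mathbb{R}^d,k)$. Then I would define $\psi\colon F_r(\mathbb{R}^d,k)\to F(\mathbb{R}^d,k)$ to be simply the inclusion, which is legitimate because $\chi^{-1}(r,+\infty)\subseteq\chi^{-1}(0,+\infty)$. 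Next I would check that $\psi\circ\varphi$ and $\varphi\circ\psi$ are each homotopic to the identity: both composites have the form $p\mapsto\mu(p)\,p$ for a continuous positive scalar function $\mu$, and the straight-line homotopy $H(p,t)=\big((1-t)+t\mu(p)\big)\,p$ stays inside the relevant space because the coefficient $(1-t)+t\mu(p)$ is always positive, so $H(p,t)$ lies in $F(\mathbb{R}^d,k)$ by homogeneity of $\chi$; a little care is needed on $F_r(\mathbb{R}^d,k)$ to ensure the coefficient keeps $\chi$ above $r$, which can be arranged by choosing $\mu\geq 1$ (hence picking $\lambda(p)=1+\tfrac{r}{\chi(p)}\geq 1$ rather than $\tfrac{2r}{\chi(p)}$, and checking the inclusion composite separately).

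The main obstacle, and the point requiring the most attention, is verifying that the homotopy $H(p,t)$ genuinely stays within $F_r(\mathbb{R}^d,k)$ for the composite $\varphi\circ\psi$ — that is, that rescaling a hard-disk configuration by a factor interpolating between $1$ and something $\geq 1$ never decreases the minimal pairwise distance below $2r$. This follows from $\chi\big(((1-t)+t\mu(p))p\big)=\big((1-t)+t\mu(p)\big)\chi(p)$ together with $(1-t)+t\mu(p)\geq\min\{1,\mu(p)\}$, so if $\mu(p)\geq1$ and $\chi(p)>r$ the product stays $>r$. Once the scaling function is chosen with this monotonicity in mind the verification is routine. Thus $\varphi$ and $\psi$ are mutually inverse up to homotopy, giving the homotopy equivalence $F_r(\mathbb{R}^d,k)\simeq F(\mathbb{R}^d,k)$ for every $r>0$ and $k\geq 2$, as claimed.
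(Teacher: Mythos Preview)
Your proposal is correct and follows essentially the same approach as the paper: the paper also uses the inclusion as one map and a radial rescaling $p\mapsto\big(1+\tfrac{2r}{\chi(p)}\big)p$ as the other, with the homotopy $H(p,t)=\big(1+\tfrac{2rt}{\chi(p)}\big)p$ (which coincides with your straight-line interpolation for that choice of $\lambda$), and the same homogeneity argument $\chi(\lambda p)=\lambda\chi(p)$ to verify that the homotopy stays in the required subspace. Your observation that one must pick the scaling factor $\geq 1$ to keep the homotopy inside $F_r(\mathbb{R}^d,k)$ is exactly the point, and your eventual choice $\lambda(p)=1+\tfrac{r}{\chi(p)}$ differs from the paper's $1+\tfrac{2r}{\chi(p)}$ only by an inessential constant.
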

\begin{proof}

Define \[\rho:F(\mathbb{R}^d,k)\to F_r(\mathbb{R}^d,k),~~\rho(p)=\left(\dfrac{\chi(p)+2r}{\chi(p)}\right)p,\]
note that $\rho$ is well-defined, because $\chi(p)>0$ and $\chi\left(\dfrac{\chi(p)+2r}{\chi(p)}p\right)=\chi(p)+2r> 2r> r$. Moreover, $\rho$ is continuous, because $\chi$ is. 

Let $i:F_r(\mathbb{R}^d,k)\hookrightarrow F(\mathbb{R}^d,k)$ denote the inclusion map. We will see that $i\circ\rho\simeq id_{F(\mathbb{R}^d,k)}$ and $\rho\circ i\simeq id_{F_r(\mathbb{R}^d,k)}$. Define \[H:F(\mathbb{R}^d,k)\times [0,1]\to F(\mathbb{R}^d,k),~~(p,t)\mapsto H(p,t):=\dfrac{\chi(p)+2rt}{\chi(p)}p.\] One has $H$ is well-defined and continuous. Furthermore, $H(p,0)=p$ and $H(p,1)=(i\circ \rho)(p)$ for $p\in F(\mathbb{R}^d,k)$. Therefore, $H$ is a homotopy between $id_{F(\mathbb{R}^d,k)}$ and $i\circ \rho$.

Similarly, consider the restriction map \[\hat{H}:=H\mid_{F_r(\mathbb{R}^d,k)\times [0,1]}:F_r(\mathbb{R}^d,k)\times [0,1]\to F_r(\mathbb{R}^d,k),~~(p,t)\mapsto \hat{H}(p,t):=\dfrac{\chi(p)+2rt}{\chi(p)}p.\]
Note that $\hat{H}$ is well-defined, because
\begin{eqnarray*}
\chi(\hat{H}(p,t)) &=& \chi(H(p,t))\\
&=& \chi\left(\dfrac{\chi(p)+2rt}{\chi(p)}p\right)\\
&=& \chi(p)+2rt\\
&\geq& \chi(p)\\
&>& r, \text{ for any } (p,t)\in F_r(\mathbb{R}^d,k)\times [0,1].
\end{eqnarray*} Hence, $\hat{H}$ is a homotopy between $id_{F_r(\mathbb{R}^d,k)}$ and $\rho\circ i$. 
\end{proof}

\noindent \textit{Proof of Theorem \ref{principal-theorem}} We recall that $\text{TC}$ is a homotopy invariant, so by Lemma \ref{hard-spheres}, it is sufficient to calculate the topological complexity $\text{TC}((SO(d))^k\times F(\mathbb{R}^d,k))$.  By \cite{farber2004topological}, we have \[\text{TC}(F(\mathbb{R}^d,k))=zcl_{\mathbb{Z}_2}(F(\mathbb{R}^d,k))+1=\left\{
  \begin{array}{ll}
    2k-2, & \hbox{if $d=2$;} \\
    2k-1, & \hbox{if $d=3$.}
  \end{array}
\right.\] Then by Lemmas \ref{prop2} and \ref{exem} we obtain our theorem. $\Box$

\section{Motion planning algorithms}

In this section, we present optimal tame motion planning algorithms in:
\begin{enumerate}
    \item[(1)] product of odd-dimensional spheres,
    \item[(2)] product of 3-dimensional real projective spaces,
    \item[(3)] the configuration space $F_r(\mathbb{R}^d,k)$. Here, the algorithms will be induce from the algorithms given by the authors in \cite{zapata2019multitasking},
    \item[(4)] the product $(\mathbb{S}^1)^k\times F_r(\mathbb{R}^2,k)$ and $(\mathbb{RP}^3)^k\times F_r(\mathbb{R}^3,k)$.
\end{enumerate} All the algorithms are easily implementable in practice.

\subsection{On product of odd-dimensional spheres}

Assume that $m$ is odd. We recall that the topological complexity $\text{TC}(S^m)=2$ and  $\text{TC}(\underbrace{S^m\times\cdots\times S^m}_{k \text{ times }})=k+1$ for any $k\geq 2$ (see Lemma \ref{exem}). In this section, using Remark \ref{product-mp}, we will give an optimal tame motion planning algorithm on $\underbrace{S^m\times\cdots\times S^m}_{k \text{ times }}$ having $k+1$ domains of continuity $W_k\ldots,W_{2k}$ such that each $W_\ell$ satisfies the 'Topological disjoint condition' (see Remark \ref{product-mp}), i.e.,  $\overline{W_\ell}\subset \bigcup_{i\leq \ell} W_i$.

Let $v$ denote a fixed unitary tangent vector field on $S^{m}$, say $v(x_1,y_1,\ldots,x_\ell,y_\ell)=(-y_1,x_1,\ldots,-y_\ell,x_\ell)$ with $m+1=2\ell$. A tame motion planning algorithm to $S^m$ is given by $s:=\{s_i:U_i\to PS^m\}_{i=1}^{2}$, where \begin{eqnarray*}
F_1&=& \{(\theta_1,\theta_2)\in S^m\times S^m\mid ~~\theta_1= -\theta_2\},\\
F_2&=&  \{(\theta_1,\theta_2)\in S^m\times S^m\mid ~~\theta_1\neq -\theta_2\},
\end{eqnarray*} for all  $(\theta_1,\theta_2)\in F_1$,
$$
s_1(\theta_1,\theta_2)(t) =  \begin{cases} 
\dfrac{(1-2t)\theta_1+2tv(\theta_1)}{\parallel (1-2t)\theta_1+2tv(\theta_1) \parallel}, &\hbox{ $0\leq t\leq\dfrac{1}{2}$;}\\
\dfrac{(2-2t)v(\theta_1)+(2t-1)\theta_2}{\parallel (2-2t)v(\theta_1)+(2t-1)\theta_2 \parallel}, &\hbox{ $\dfrac{1}{2}\leq t\leq 1$,}
\end{cases} $$
 and $$ 
s_2(\theta_1,\theta_2)(t) = \dfrac{(1-t)\theta_1+t\theta_2}{\parallel (1-t)\theta_1+t\theta_2 \parallel} \text{ for all } (\theta_1,\theta_2)\in F_2.
$$ We note that \begin{equation}\label{inter}
    F_1\cap F_2=\emptyset, \overline{F_1}=F_1 \text{ and } \overline{F_2}=S^m.
\end{equation} Let $k\geq 2$ and for each $\ell=k,\ldots,2k$ define $$ W_{\ell}=\bigsqcup_{i_1+\cdots+i_k=l}F_{i_1}\times\cdots\times F_{i_k}. $$  One has that each $W_{\ell}$ is an ENR and $W_{k},\ldots,W_{2k}$ form a partition of $\left(\underbrace{S^m\times\cdots\times S^m}_{k \text{ times }}\right)\times \left(\underbrace{S^m\times\cdots\times S^m}_{k \text{ times }}\right)$. In view of (\ref{inter}), the sets assembling each $W_\ell$ are \textit{topologically disjoint}, because $\left(\overline{U_{i_1}\times\cdots\times U_{i_k}}\right)\cap \left( U_{i_1^\prime}\times\cdots\times U_{i_k^\prime}\right)=\varnothing$ for $(i_1,\ldots.i_k)\neq(i_1^\prime,\ldots,i_k^\prime)$ and $i_1+\cdots+i_k=l=i_1^\prime+\cdots+i_k^\prime$. Hence, the sets $W_\ell$ are ENR's covering $\left(\underbrace{S^m\times\cdots\times S^m}_{k \text{ times }}\right)\times \left(\underbrace{S^m\times\cdots\times S^m}_{k \text{ times }}\right)$ on each of which the corresponding algorithms $s_1$ and $s_2$ assemble a continuous motion planning algorithm. We have thus constructed a tame motion planning algorithm, say $s$, in $\underbrace{S^m\times\cdots\times S^m}_{k \text{ times }}$ having $k+1$ regions of continuity $W_k,\ldots,W_{2k}$. Furthermore, each $W_\ell$ satisfies $\overline{W_\ell}\subset \bigsqcup_{i\leq \ell}W_i$.

\subsection{On product of 3-dimensional projective spaces}
We recall that the topological complexity $\text{TC}(\mathbb{R}P^3)=4$ and for any $k\geq 2$,  $\text{TC}(\underbrace{\mathbb{R}P^3\times\cdots\times\mathbb{R}P^3}_{k \text{ times }})=3k+1$ (see Lemma \ref{exem}). In this section, again using Remark \ref{product-mp}, we will give an optimal tame motion planning algorithm on $\underbrace{\mathbb{R}P^3\times\cdots\times\mathbb{R}P^3}_{k \text{ times }}$ having $3k+1$ domains of continuity $X_k,\ldots,X_{4k}$ such that each $X_\ell$ satisfies the \textit{'Topological disjoint condition'}, i.e., $\overline{X_\ell}\subset \bigcup_{j\leq \ell} X_j$. 

For our purposes, using the idea from \cite{farber2004instabilities}, we give an optimal tame motion planning algorithm on $\mathbb{R}P^3$ having $4$ domains of continuity $E_1,E_2,E_3,E_4$ such that each $E_i$  satisfies the 'Topological disjoint condition' (see Remark \ref{product-mp}). Then we will apply Remark \ref{product-mp}. 

Here we consider the real projective space $\mathbb{R}P^d=\dfrac{S^d}{x\sim -x}$ as the quotient space from $S^d$ under the antipodal action. Consider the open covering \[U_1\cup\cdots\cup U_{d
+1}=\mathbb{R}P^d,\] where for each $i=1,\ldots,d+1$, $U_i=\{[x_1,\ldots,x_{d+1}]\in \mathbb{R}P^d:~~x_i\neq 0\}$. 
For each $i=1,\ldots,d+1$ define a map $\varphi_i:U_i\to \mathbb{R}^d$ by $$
\varphi_i[x_1,\ldots,x_{d+1}]=\left(\dfrac{x_1}{x_{i}},\ldots,\dfrac{x_{i-1}}{x_{i}},\dfrac{x_{i+1}}{x_{i}},\ldots,\dfrac{x_{d+1}}{x_{i}}\right)
$$ One has $\varphi_i$ is a homeomorphism, because it has a continuous inverse given by $$\psi_i(x_1,\ldots,x_d)=\left[\dfrac{1}{\left(x_1^2+\cdots+x_d^2+1\right)^{1/2}}(x_1,\ldots,x_{i-1},1,x_{i},\ldots,x_{d})\right].$$ Consider the linear homotopy $H:\mathbb{R}^d\times [0,1]\to \mathbb{R}^d$ given by $$H(x,t)=(1-t)x.$$ Now, for each $i=1,\ldots,d+1$, $U_i$ is contractible. In fact, we can define the homotopy $H^i:U_i\times [0,1]\to \mathbb{R}P^d$ by $$
H^i([x_1,\ldots,x_{d+1}],t)=\psi_i\left(H\left(\varphi_i[x_1,\ldots,x_{d+1}],t\right)\right).
$$

On the other hand, for each $i=1,\ldots,d+1$, set \[f_i:\mathbb{R}P^d\to[0,1],~~f_i([x_1,\ldots,x_{k+1}])=x_i^2.\] On has $f_i$ are well-defined smooth functions. The support\footnote{Recall that the support $supp(f)$ of a continuous function $f:X\to \mathbb{R}$ is defined as the closure of the set $\{x\in X:~~f(x)\neq 0\}$.} of $f_i$ being the closure of $U_i$. Indeed the set $\{[x_1,\ldots,x_{k+1}]\in \mathbb{R}P^d:~~f_i([x_1,\ldots,x_{k+1}])\neq 0\}$ is the subset $U_i$. Moreover, for any $[x]\in \mathbb{R}P^d$, $$f_1[x]+\cdots+f_{d+1}[x]=1.$$

Let a subset $V_i\subset\mathbb{R}P^d$, where $i=1,\ldots,d+1$, be defined by the following system of inequalities
$$
\begin{cases}
f_j[x]<\dfrac{2j}{(d+1)(d+2)},& \hbox{ for all $j<i$,}\\
f_i[x]\geq \dfrac{2i}{(d+1)(d+2)}.
\end{cases}
$$ Note that each $\dfrac{i}{(d+1)(d+2)}$ is a regular value of the function $f_i$, so each $V_i$ is a manifold with boundary and hence an ENR. Furthermore, one easily checks that:
\begin{itemize}
    \item $V_i$ is contained in $U_i$; therefore, the homotopy $H^i:U_i\times [0,1]\to \mathbb{R}P^d$ restricts onto $V_i$ and defines a homotopy $H^i$ over $V_i$;
    \item the sets $V_i$ are pairwise disjoint, $V_i\cap V_j=\varnothing$ for $i\neq j$;
    \item $V_1\cup\cdots\cup V_{d+1}=\mathbb{R}P^d$.
    \item each $V_i$ satisfies the 'Topological disjoint condition', i.e., $\overline{V_i}\subset \bigcup_{j\leq i}V_j$.
\end{itemize}

Now, recall that $\mathbb{R}P^3$ is a Lie group under the quaternionic product \begin{eqnarray*}
[x_1,x_2,x_3,x_{4}]\cdot [y_i,y_2,y_3,y_{4}] &=& [\langle x,(y_1,-y_2,-y_3,-y_4)\rangle, \langle x,(y_2,y_1,y_4,-y_3)\rangle,\\ & & \langle   x,(y_3,-y_4,y_1,y_2)\rangle, \langle x,(y_4,y_3,-y_2,y_1)\rangle],
\end{eqnarray*} with unit $[1,0,0,0]$ and inverse (given by the quaternionic conjugation) $[x_1,x_2,x_3,x_{4}]^{-1}=[x_1,-x_2,-x_3,-x_{4}]$.

For $i=1,2,3,4$ set $$E_i=\{([x],[y])\in\mathbb{R}P^3\times\mathbb{R}P^3:~~[x][y]^{-1}\in V_i\}.$$ It is clear that $E_1\cup E_2\cup E_3\cup E_4=\mathbb{R}P^3\times\mathbb{R}P^3$, the sets $E_i$ are pairwise disjoint, each $E_i$ is an ENR and each $E_i$ satisfies the 'Topological disjoint condition'. Then we may define $\sigma_i:E_i\to P\left(\mathbb{R}P^3\right)$ by the formula \begin{equation}
   \sigma_i([x],[y]) = H^i([x][y]^{-1},t)\cdot [y].
\end{equation} It is a continuous motion planning over $E_i$. Hence, $\sigma=\{s_i:E_i\to P\left(\mathbb{R}P^3\right)\}_{i=1}^{4}$ is an optimal tame motion planner on $\mathbb{R}P^3$ and each $E_i$ satisfies $\overline{E_i}\subset \bigcup_{j\leq i} E_i$. 

 In view of Remark \ref{product-mp}, let $k\geq 2$ and for each $\ell=k,\ldots,4k$ define $$ X_{\ell}=\bigsqcup_{i_1+\cdots+i_k=l}E_{i_1}\times\cdots\times E_{i_k}. $$  One has that each $X_{\ell}$ is an ENR and $X_{k},\ldots,X_{4k}$ form a partition of $\left(\underbrace{\mathbb{R}P^3\times\cdots\times \mathbb{R}P^3}_{k \text{ times }}\right)\times \left(\underbrace{\mathbb{R}P^3\times\cdots\times \mathbb{R}P^3}_{k \text{ times }}\right)$. The sets assembling each $X_\ell$ are \textit{topologically disjoint}. Hence, the sets $X_\ell$ are ENR's covering $\left(\underbrace{\mathbb{R}P^3\times\cdots\times \mathbb{R}P^3}_{k \text{ times }}\right)\times \left(\underbrace{\mathbb{R}P^3\times\cdots\times \mathbb{R}P^3}_{k \text{ times }}\right)$ on each of which the corresponding algorithms $\sigma_i$ assemble a continuous motion planning algorithm. We have thus constructed a tame motion planning algorithm (say $\sigma$) in $\underbrace{\mathbb{R}P^3\times\cdots\times \mathbb{R}P^3}_{k \text{ times }}$ having $3k+1$ regions of continuity $X_k,\ldots,X_{4k}$. Furthermore, each $X_{\ell}$ satisfies  $\overline{X_\ell}\subset \bigcup_{i\leq \ell}X_i$.

\subsection{On the configuration space $F_r(\mathbb{R}^d,k)$}

In this section we present a tame motion planning algorithm on $F_r(\mathbb{R}^d,k)$ having $2k-1$ domains of continuity. The algorithm works for any $r>0$, $d\geq 2$ and $k\geq 2$; this algorithm is optimal when $d$ is odd.

For our purposes, we may recall the algorithm on the configuration space $F(\mathbb{R}^d,k)$ given by the authors in \cite{zapata2019multitasking}, for any $d\geq 2$, say $\omega=\{\omega_\ell:Y_\ell\to PF(\mathbb{R}^d,k)\}_{\ell=2}^{2k}$. Denote by  $p:\mathbb{R}^d\to \mathbb{R},(x_1,\ldots,x_q)\mapsto x_1$ the projection in the first coordinate. For  a  configuration $C\in F(\mathbb{R}^d,k)$,  where $C=(x_1,\ldots,x_{k})$ with $x_i\in\mathbb{R}^d, ~x_i\neq x_j$ for $i\neq j$, consider the set of projection points \[P(C) =\{p(x_1),\ldots,p(x_{k})\},\] $p(x_i)\in \mathbb{R}$, $i=1,\ldots,k$. The cardinality of this set will be denoted $\text{cp}(C)$. Note that $\text{cp}(C)$ can be any number $1, 2,\ldots,k$. The algorithm $\omega=\{\omega_\ell:Y_\ell\to PF(\mathbb{R}^d,k)\}_{\ell=2}^{2k}$ has domains of continuity $Y_2,Y_3,\ldots, Y_{2k}$, where \begin{equation*}
    Y_\ell=\bigcup_{i+j=\ell}A_i\times A_j,
\end{equation*} and $A_i$ is the set of all configurations $C\in F(\mathbb{R}^d,k)$, with $\text{cp}(C)=i$.

We now go back to the notation introduced in Section \ref{proof-principalTH} where we constructed a homotopy equivalence $\rho:F(\mathbb{R}^d,k)\to F_r(\mathbb{R}^d,k),~~\rho(p)=\left(\dfrac{\chi(p)+2r}{\chi(p)}\right)p,$ whose homotopy inverse is the inclusion map $i:F_r(\mathbb{R}^d,k)\to F(\mathbb{R}^d,k)$. Together with the  homotopy $\hat{H}:F_r(\mathbb{R}^d,k)\times [0,1]\to F_r(\mathbb{R}^d,k),~~(p,t)\mapsto \hat{H}(p,t):=\dfrac{\chi(p)+2rt}{\chi(p)}p.$ The map $\hat{H}$ is a homotopy between $id_{F_r(\mathbb{R}^d,k)}$ and $\rho\circ i$. By Remark \ref{homotopy-invariance}, the optimal tame motion planning algorithm $\omega=\{\omega_\ell:Y_\ell\to PF(\mathbb{R}^d,k)\}_{\ell=2}^{2k}$ in  $F(\mathbb{R}^d,k)$ induces an optimal tame motion planning algorithm in $F_r(\mathbb{R}^d,k)$, say $\hat{\omega}=\{\hat{\omega}_\ell:Z_\ell\to PF_r(\mathbb{R}^d,k)\}_{\ell=2}^{2k}$, where each $Z_\ell$ is given by $$Z_\ell=\left(i\times i\right)^{-1}(Y_\ell)$$ and each local motion planner $\hat{\omega}_\ell$ by $$
\hat{\omega}_\ell(p,q) = \begin{cases}
    \hat{H}_{3t}(p), & \hbox{$0\leq t\leq \frac{1}{3}$;} \\
    \rho\left(\omega_\ell(p,q)(3t-1)\right), & \hbox{$\frac{1}{3}\leq t\leq \frac{2}{3}$;}\\
    \hat{H}_{3-3t}(q), & \hbox{$\frac{2}{3}\leq t\leq 1$.}
\end{cases}
$$

\subsection{On the configuration space $F_r(\mathbb{R}^d,k)$ for $d$ even}

By \cite{zapata2019multitasking}, we can improve the motion planning algorithm in $F_r(\mathbb{R}^d,k)$ of the previous section under the assumption that $d\geq 2$ is even. The improved motion planning algorithm will have $2k-2$ domains of continuity; this algorithm is optimal.

For our purposes, we may recall the algorithm on the configuration space $F(\mathbb{R}^d,k)$ given by the authors in \cite{zapata2019multitasking}, for any $d\geq 2$ even, say $\Omega=\{\Omega_\ell:M_\ell\to PF(\mathbb{R}^d,k)\}_{\ell=3}^{2k}$. For a configuration $C=(x_1,\ldots,x_k)\in F(\mathbb{R}^d,k)$, consider the affine line $L_C$ through the points $x_1$ and $x_2$, oriented in the direction of the unit vector \[e_C=\dfrac{x_2-x_1}{\mid x_2-x_1\mid},\] and let $L^\prime_C$ denote the line passing through the origin and parallel to $L_C$ (with the same orientation as $L_C$). Let $p_C:\mathbb{R}^d\to L_C$ be the orthogonal projection, and let $\overline{\text{cp}}(C)$ be the cardinality of the set $\{p_C(x_1),\ldots,p_C(x_k)\}$. Note that $\overline{\text{cp}}(C)$ ranges in $\{2,\ldots, k\}$. For $i\in\{2,\ldots,k\}$, let $G_i$ denote the set of all configurations $C\in F(\mathbb{R}^d,k)$ with $\overline{\text{cp}}(C)=i$. The algorithm $\Omega=\{\Omega_\ell:M_\ell\to PF(\mathbb{R}^d,k)\}_{\ell=3}^{2k}$ has domains of continuity $M_3,\ldots, M_{2k}$, where \begin{equation*}
    M_\ell=\bigcup_{i+j=\ell}A_{ij}\cup \bigcup_{r+s=\ell+1}B_{rs},
\end{equation*} and \begin{align*}
\red{A_{ij}:=\{(C,C^\prime)\in G_i\times G_j\colon e_C\neq -e_{C^\prime}\}}\\
\red{B_{ij}:=\{(C,C^\prime)\in G_i\times G_j\colon e_C =  -e_{C^\prime}\}}
\end{align*}

Again, we go back to the notation introduced in Section \ref{proof-principalTH} where we constructed a homotopy equivalence $\rho:F(\mathbb{R}^d,k)\to F_r(\mathbb{R}^d,k),~~\rho(p)=\left(\dfrac{\chi(p)+2r}{\chi(p)}\right)p,$ whose homotopy inverse is the inclusion map $i:F_r(\mathbb{R}^d,k)\to F(\mathbb{R}^d,k)$. Together with the  homotopy $\hat{H}:F_r(\mathbb{R}^d,k)\times [0,1]\to F_r(\mathbb{R}^d,k),~~(p,t)\mapsto \hat{H}(p,t):=\dfrac{\chi(p)+2rt}{\chi(p)}p.$ The map $\hat{H}$ is a homotopy between $id_{F_r(\mathbb{R}^d,k)}$ and $\rho\circ i$. By Remark \ref{homotopy-invariance}, the optimal tame motion planning algorithm $\Omega=\{\Omega_\ell:M_\ell\to PF(\mathbb{R}^d,k)\}_{\ell=3}^{2k}$ in  $F(\mathbb{R}^d,k)$ (for $d$ even) induces an optimal tame motion planning algorithm in $F_r(\mathbb{R}^d,k)$ (for $d$ even), say $\hat{\Omega}=\{\hat{\Omega}_\ell:N_\ell\to PF_r(\mathbb{R}^d,k)\}_{\ell=3}^{2k}$, where each $N_\ell$ is given by $$N_\ell=\left(i\times i\right)^{-1}(M_\ell)$$ and each local motion planner $\hat{\Omega}_\ell$ by $$
\hat{\Omega}_\ell(p,q) = \begin{cases}
    \hat{H}_{3t}(p), & \hbox{$0\leq t\leq \frac{1}{3}$;} \\
    \rho\left(\Omega_\ell(p,q)(3t-1)\right), & \hbox{$\frac{1}{3}\leq t\leq \frac{2}{3}$;}\\
    \hat{H}_{3-3t}(q), & \hbox{$\frac{2}{3}\leq t\leq 1$.}
\end{cases}
$$

\subsection{On the product $(\mathbb{S}^1)^k\times F_r(\mathbb{R}^2,k)$ and $(\mathbb{RP}^3)^k\times F_r(\mathbb{R}^3,k)$}

The optimal tame motion planning algorithms in $(\mathbb{S}^1)^k\times F_r(\mathbb{R}^2,k)$ and $(\mathbb{RP}^3)^k\times F_r(\mathbb{R}^3,k)$ are given, one more time, by the construction given by Remark \ref{product-mp} assembling the algorithms above.

\begin{remark}
We note that the results and motion planning algorithms described in this paper can also be extended to the case of higher topological complexity (in the sense of Rudyak \cite{rudyak2010higher}) and obtain multitasking collision-free optimal motion planning algorithms for rigid bodies (in a similar way as \cite{zapata2019multitasking}).
\end{remark}

\bibliographystyle{plain}

\begin{thebibliography}{10}

\bibitem{bajdRobotics} T. Bajd, M. Mihelj, J. Lenarcic, A. Stanovnik and M. Munih, {\it 'Robotics'}, International Series on Intelligent Systems, Control, and Automation: Science and Engineering, {\bf 43} (2010).

\bibitem{cohen2011topological} D. Cohen and M. Farber, {\it 'Topological complexity of collision-free motion planning on surfaces'}, Compositio Mathematica, London Mathematical Society. {\bf 147} (2011), no. 2, 649--660.

\bibitem{dold2012lectures} A. Dold, {\it 'Lectures on algebraic topology'}, Springer Science \& Business Media. (2012).

\bibitem{farber2003topological} M. Farber, {\it `Topological complexity of motion planning'}, Discrete and Computational Geometry. {\bf 29} (2003), no. 2, 211--221.

\bibitem{farber2004instabilities} M. Farber, {\it 'Instabilities of robot motion'}, Topology and its Applications, Elsevier. {\bf 140} (2004), no. 2-3, 245--266.

\bibitem{farber2004topological} M. Farber and S. Yuzvinsky, {\it 'Topological robotics: subspace arrangements and collision free motion planning'}, Translations of the American Mathematical Society-Series 2, Providence [etc.] American Mathematical Society, 1949-. {\bf 212} (2004), 145--156. 

\bibitem{farber2003topologicalproject} M. Farber, S. Tabachnikov and S. Yuzvinsky, {\it 'Topological robotics: motion planning in projective spaces'}, International Mathematics Research Notices, Hindawi Publishing Corporation, n. 34 (2003), 1853--1870. 

\bibitem{fadell1962configuration} E. Fadell and L. Neuwirth, {\it 'Configuration spaces'}. Math. Scand. {\bf 10} (1962), no. 4, 111-118.

\bibitem{latombe2012robot}  J.-C. Latombe, {\it `Robot motion planning'} (Springer, New York, 1991). 

\bibitem{lavalle2006planning} S. M. LaValle, {\it `Planning algorithms'}  (Cambridge University Press, Cambridge, 2006).

\bibitem{rudyak2010higher} Y. Rudyak, {\it 'On higher analogs of topological complexity'}, Topology and its Applications, Elsevier. {\bf 157} (2010), no. 5, 916--920. 

\bibitem{zapata2019multitasking} C.A.I. Zapata and J. González, {\it 'Multitasking collision-free motion planning algorithms in Euclidean spaces'}, Preprint, 2019, arXiv:1906.03239.
\end{thebibliography}

\end{document}